\documentclass[10pt,draft]{article}
\usepackage{amsmath,amscd}
\usepackage{amssymb,latexsym,amsthm}
\usepackage{color}
\usepackage[spanish,english]{babel}
\usepackage{amssymb}
\usepackage{color}
\usepackage{amsmath,amsthm,amscd}
\usepackage[latin1]{inputenc}
\usepackage{lscape}
\usepackage{fancyhdr}
\usepackage{amsfonts}
\usepackage{pb-diagram}
\usepackage{hyperref}
\usepackage{float}
\numberwithin{equation}{section}
\newtheorem{theorem}{Theorem}[section]

\newtheorem{definition}[theorem]{Definition}
\newtheorem{proposition}[theorem]{Proposition}

\theoremstyle{definition}

\newtheorem{example}[theorem]{Example}

\newtheorem{remark}[theorem]{Remark}


\hoffset-0.3in \voffset-1.3cm \setlength{\oddsidemargin}{9mm}
\setlength{\textheight}{21.3cm}\setlength{\textwidth}{16cm}

\title{\textbf{The center of the total ring of fractions}}
\author{Oswaldo Lezama\\
\texttt{jolezamas@unal.edu.co}
\\Helbert Venegas\\
\texttt{hjvenegasr@unal.edu.co}
\\ Seminario de Álgebra Constructiva - SAC$^2$\\ Departamento de Matemáticas\\ Universidad Nacional de
Colombia, Sede Bogotá}
\date{}
\begin{document}
\maketitle
\begin{abstract}
\noindent Let $A$ be a right Ore domain, $Z(A)$ be the center of
$A$ and $Q_r(A)$ be the right total ring of fractions of $A$. If
$K$ is a field and $A$ is a $K$-algebra, in this short paper we
prove that if $A$ is finitely generated and ${\rm GKdim}(A)<{\rm
GKdim}(Z(A))+1$, then $Z(Q_r(A))\cong Q(Z(A))$. Many examples that
illustrate the theorem are included, most of them within the skew
$PBW$ extensions.

\bigskip

\noindent \textit{Key words and phrases.} Ore domains, total ring
of fractions, center of a ring, Gelfand-Kirillov dimension, skew
$PBW$ extensions.

\bigskip

\noindent 2010 \textit{Mathematics Subject Classification.}
Primary: 16S85, 16U70. Secondary: 16P90, 16S36.
\end{abstract}


\section{Introduction}

Given an Ore domain $A$, it is interesting to know when
$Z(Q(A))\cong Q(Z(A))$, where $Z(A)$ is the center of $A$ and
$Q(A)$ is the total ring of fractions of $A$. This question became
important after the formulation of the Gelfand-Kirillov conjecture
in \cite{GK}: \textit{Let $\mathcal{G}$ be an algebraic Lie
algebra of finite dimension over a field $K$, with $char(K)=0$.
Then, there exist integers $n,k\geq 1$ such that
\begin{equation}\label{GKconjecture}
Q(\mathcal{U}(\mathcal{G}))\cong Q(A_n(K[s_1,\dots,s_k])),
\end{equation}
where $\mathcal{U}(\mathcal{G})$ is the enveloping algebra of
$\mathcal{G}$ and $A_n(K[s_1,\dots,s_k])$ is the general Weyl
algebra over $K$}. In the investigation of this famous conjecture
the isomorphism between the center of the total ring of fractions
and the total ring of fractions of the center occupies a special
key role. There are remarkable examples of algebras for which the
conjecture holds and they satisfy the isomorphism. For example, if
$\mathcal{G}$ is a finite dimensional nilpotent Lie algebra over a
field $K$, with ${\rm char}(K)=0$, then the conjecture holds and
$Z(Q(\mathcal{U}(\mathcal{G})))\cong
Q(Z(\mathcal{U}(\mathcal{G})))$ (\cite{GK}, Lemma 8). More
recently, the quantum version of the Gelfand-Kirillov conjecture
has occupied the attention of many researchers. One example of
this is the following (see \cite{Alev2}, Theorem 2.15): Let
$U_q^{+}(sl_m)$ be the quantum enveloping algebra of the Lie
algebra of strictly superior triangular matrices of size $m\times
m$, $m\geq 3$, over a field $K$. If $m=2n+1$, then
$Q(U_q^{+}(sl_m))\cong Q({\rm K}_{{\rm q}}[x_1,\dots, x_{2n^2}])$,
where ${\rm K}_{{\rm q}}[x_1,\dots, x_{2n^2}]$ is the quantum ring
of polynomials, ${\rm K}:=Q(Z(U_q^{+}(sl_m)))$ and ${\rm
q}:=[q_{ij}]\in M_{2n^2}(K)$, with $q_{ii}=1=q_{ij}q_{ji}$, and
$q_{ij}$ is a power of $q$ for every $1\leq i,j\leq 2n^2$. If
$m=2n$, then $Q(U_q^{+}(sl_m))\cong Q({\rm K}_{{\rm q}}[x_1,\dots,
x_{2n(n-1)}])$, where ${\rm q}:=[q_{ij}]\in M_{2n(n-1)}(K)$, with
$q_{ii}=1=q_{ij}q_{ji}$, and $q_{ij}$ is a power of $q$ for every
$1\leq i,j\leq 2n(n-1)$. Moreover, in both cases
$Z(Q(U_q^{+}(sl_m)))\cong Q(Z(U_q^{+}(sl_m)))$.

Let $A$ be a right Ore domain. In this paper we study the
isomorphism $Z(Q_r(A))\cong Q(Z(A))$, where $Z(A)$ is the center
of $A$ and $Q_r(A)$ is the right total ring of fractions of $A$.
The main tool that we will use is the Gelfand-Kirillov dimension,
so we will assume that $A$ is a $K$-algebra, where $K$ is an
arbitrary field. The principal result is Theorem
\ref{theorem6.4.2} proved in Section 2. The result can also be
interpreted as a way of computing the center of $Q_r(A)$. In
Section 3 we include many examples that illustrate the theorem,
most of them within the skew $PBW$ extensions.

We start with the following known facts about skew $PBW$
extensions that will be used in the examples.

\begin{definition}[\cite{LezamaGallego}]\label{gpbwextension}
Let $R$ and $A$ be rings. We say that $A$ is a \textit{skew $PBW$
extension of $R$} $($also called a $\sigma-PBW$ extension of
$R$$)$ if the following conditions hold:
\begin{enumerate}
\item[\rm (i)]$R\subseteq A$.
\item[\rm (ii)]There exist finitely many elements $x_1,\dots ,x_n\in A$ such $A$ is a left $R$-free module with basis
\begin{center}
${\rm Mon}(A):= \{x^{\alpha}=x_1^{\alpha_1}\cdots
x_n^{\alpha_n}\mid \alpha=(\alpha_1,\dots ,\alpha_n)\in
\mathbb{N}^n\}$, with $\mathbb{N}:=\{0,1,2,\dots\}$.
\end{center}
The set ${\rm Mon}(A)$ is called the set of standard monomials of
$A$.
\item[\rm (iii)]For every $1\leq i\leq n$ and $r\in R-\{0\}$ there exists $c_{i,r}\in R-\{0\}$ such that
\begin{equation}\label{sigmadefinicion1}
x_ir-c_{i,r}x_i\in R.
\end{equation}
\item[\rm (iv)]For every $1\leq i,j\leq n$ there exists $c_{i,j}\in R-\{0\}$ such that
\begin{equation}\label{sigmadefinicion2}
x_jx_i-c_{i,j}x_ix_j\in R+Rx_1+\cdots +Rx_n.
\end{equation}
Under these conditions we will write $A:=\sigma(R)\langle
x_1,\dots ,x_n\rangle$.
\end{enumerate}
\end{definition}
Associated to a skew $PBW$ extension $A=\sigma(R)\langle x_1,\dots
,x_n\rangle$ there are $n$ injective endomorphisms
$\sigma_1,\dots,\sigma_n$ of $R$ and $\sigma_i$-derivations, as
the following proposition shows.

\begin{proposition}[\cite{LezamaGallego}, Proposition 3]\label{sigmadefinition}
Let $A$ be a skew $PBW$ extension of $R$. Then, for every $1\leq
i\leq n$, there exist an injective ring endomorphism
$\sigma_i:R\rightarrow R$ and a $\sigma_i$-derivation
$\delta_i:R\rightarrow R$ such that
\begin{center}
$x_ir=\sigma_i(r)x_i+\delta_i(r)$,
\end{center}
for each $r\in R$.
\end{proposition}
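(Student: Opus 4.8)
The plan is to build $\sigma_i$ and $\delta_i$ by hand out of clause (iii) of Definition \ref{gpbwextension}, and then to verify all the claimed properties by comparing coefficients in the free left $R$-module $A$ with basis ${\rm Mon}(A)$.

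First, fix $1\le i\le n$ and define $\sigma_i:R\to R$ by $\sigma_i(r):=c_{i,r}$ for $r\ne 0$, where $c_{i,r}\in R\setminus\{0\}$ is the element furnished by \eqref{sigmadefinicion1}, and $\sigma_i(0):=0$. Then set $\delta_i(r):=x_ir-\sigma_i(r)x_i$. By \eqref{sigmadefinicion1} this lies in $R$ for $r\ne 0$, and $\delta_i(0)=0\in R$, so $\delta_i$ is a well-defined self-map of $R$ and the formula $x_ir=\sigma_i(r)x_i+\delta_i(r)$ holds for every $r\in R$ by construction.

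The engine of the argument is that $1=x^{0}$ and $x_i=x^{e_i}$ are two distinct elements of the basis ${\rm Mon}(A)$, hence left $R$-linearly independent: an equality $ax_i+b=a'x_i+b'$ with $a,b,a',b'\in R$ forces $a=a'$ and $b=b'$. Applying this to the two expansions of $x_i(r+s)$, namely $\sigma_i(r+s)x_i+\delta_i(r+s)$ against $(\sigma_i(r)+\sigma_i(s))x_i+(\delta_i(r)+\delta_i(s))$, gives additivity of both $\sigma_i$ and $\delta_i$; expanding $x_i(rs)=(x_ir)s=(\sigma_i(r)x_i+\delta_i(r))s$ and then substituting $x_is=\sigma_i(s)x_i+\delta_i(s)$, and again comparing the coefficient of $x_i$ and the constant term, yields $\sigma_i(rs)=\sigma_i(r)\sigma_i(s)$ and $\delta_i(rs)=\sigma_i(r)\delta_i(s)+\delta_i(r)s$ — precisely multiplicativity of $\sigma_i$ and the $\sigma_i$-derivation identity for $\delta_i$. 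The computation $x_i\cdot 1=x_i$ shows $\sigma_i(1)=1$ and $\delta_i(1)=0$, so $\sigma_i$ is a ring endomorphism, and injectivity is immediate from clause (iii): $r\ne 0$ implies $\sigma_i(r)=c_{i,r}\ne 0$, so $\ker\sigma_i=0$.

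The only point requiring a touch of care — not a real obstacle — is the bookkeeping around $0$: since \eqref{sigmadefinicion1} is stated only for nonzero $r$, the additive and multiplicative identities must be checked separately when an argument or a sum/product is zero, and this is exactly where the trivial identities $x_i\cdot 0=0$ and $x_i\cdot 1=x_i$ enter. Beyond that, everything follows mechanically from the uniqueness of coordinates in the free module $A$ over $R$ on ${\rm Mon}(A)$.
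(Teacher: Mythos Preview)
Your argument is correct and is exactly the standard one: define $\sigma_i$ and $\delta_i$ via the relation \eqref{sigmadefinicion1}, and read off all the ring-theoretic identities from uniqueness of coordinates of $x_ir$ in the left $R$-basis $\{1,x_i\}\subseteq{\rm Mon}(A)$. Note that the present paper does not give its own proof of this proposition --- it simply quotes it from \cite{LezamaGallego}, Proposition~3 --- so there is nothing to compare; your write-up is essentially how that original proof goes as well. One tiny presentational point: it is worth making explicit, before you set $\sigma_i(r):=c_{i,r}$, that the element $c_{i,r}$ is \emph{unique} (this is immediate from the same $R$-linear independence of $1$ and $x_i$), so that $\sigma_i$ is well defined as a function and not just as a choice.
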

A particular case of skew $PBW$ extension is when all $\sigma_i$
are bijective and the constants $c_{ij}$ are invertible.

\bigskip

\begin{definition}[\cite{LezamaGallego}]\label{sigmapbwderivationtype}
Let $A$ be a skew $PBW$ extension. $A$ is bijective if $\sigma_i$
is bijective for every $1\leq i\leq n$ and $c_{i,j}$ is invertible
for any $1\leq i<j\leq n$.
\end{definition}

\begin{proposition}[\cite{lezamaore}, Theorem 3.6]\label{Orespbw}
Let $A=\sigma(R)\langle x_1,\dots, x_n\rangle$ be a bijective skew
$PBW$ extension of a right Ore domain $R$. Then $A$ is also a
right Ore domain.
\end{proposition}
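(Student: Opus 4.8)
I would establish the two defining properties of a right Ore domain in turn: first that $A$ has no zero divisors, and then that $A$ satisfies the right Ore condition. For the domain property the plan is to exploit the standard filtration of $A$: put $F_m(A):=\bigoplus_{|\alpha|\leq m}Rx^{\alpha}$. Relations (iii) and (iv) of Definition \ref{gpbwextension} guarantee that $\{F_m(A)\}_{m\in\mathbb{N}}$ is an exhaustive multiplicative filtration, and it is a standard fact about skew $PBW$ extensions that the associated graded ring $\mathrm{Gr}(A)$ is the \emph{quasi-commutative} skew $PBW$ extension $A^{\sigma}$ of $R$, in which $x_ir=\sigma_i(r)x_i$ and $x_jx_i=c_{i,j}x_ix_j$ (the lower-order terms of (iii)--(iv) disappear upon passing to symbols). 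Since $A$ is bijective, $A^{\sigma}$ is isomorphic to an iterated skew polynomial ring $R[z_1;\theta_1]\cdots[z_n;\theta_n]$ in which each $\theta_k$ is an automorphism; a skew polynomial ring over a domain with injective endomorphism is again a domain, so $A^{\sigma}$ is a domain by induction on $n$, and then $A$ is a domain because the leading symbol of a product of nonzero elements of a filtered ring whose associated graded ring is a domain equals the (nonzero) product of their leading symbols.

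For the right Ore condition the plan is to enlarge the ground ring to $D:=Q_r(R)$, which is a division ring since $R$ is a right Ore domain. Each $\sigma_i$ (being an automorphism of $R$) extends to an automorphism $\overline{\sigma}_i$ of $D$, each $\sigma_i$-derivation $\delta_i$ extends to an $\overline{\sigma}_i$-derivation $\overline{\delta}_i$ of $D$, and the constants $c_{i,j}$ stay invertible; hence one may form the bijective skew $PBW$ extension $B:=\sigma(D)\langle x_1,\dots,x_n\rangle$ of $D$, and there is an evident ring embedding $A\hookrightarrow B$ induced by $R\hookrightarrow D$. Now $D$ is two-sided Noetherian, so $B$ is right Noetherian because a bijective skew $PBW$ extension of a Noetherian ring is Noetherian; $B$ is a domain by the argument of the first paragraph applied to $B$; and a right Noetherian domain is a right Ore domain. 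Thus $B$ is a right Ore domain; write $\Delta:=Q_r(B)$, a division ring.

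The remaining step --- and the one I expect to be the crux --- is to prove that $\Sigma:=R\setminus\{0\}$ is a right denominator set of $A$ and that $A\Sigma^{-1}=B$ inside $\Delta$; granting this, $Q_r(A)$ exists and coincides with $Q_r(B)=\Delta$, so $A$ is a right Ore domain. Right reversibility of $\Sigma$ is free because $A$ is a domain, so everything comes down to the right Ore condition for $\Sigma$ in $A$ (given $a\in A$, $s\in\Sigma$, find $s'\in\Sigma$, $b\in A$ with $as'=sb$) and to writing an arbitrary element of $B$ as $a(s')^{-1}$ with $a\in A$, $s'\in\Sigma$. Both reduce to the following phenomenon: carrying a scalar $s'$ from the right of a standard monomial to the left gives $x^{\alpha}s'=\sigma^{\alpha}(s')x^{\alpha}+(\text{lower monomials})$, where $\sigma^{\alpha}:=\sigma_1^{\alpha_1}\cdots\sigma_n^{\alpha_n}$ and the remaining coefficients are $R$-combinations of iterates of the $\sigma_i$ and the $\delta_i$ applied to $s'$. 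The parts built only from the $\sigma_i$ are controlled by the right Ore property of $R$ (preimages of nonzero right ideals under the $\sigma_i$ are nonzero right ideals, and a finite intersection of nonzero right ideals of a right Ore domain is nonzero), while the parts involving the derivations $\delta_i$ are absorbed by an induction on the total degree $|\alpha|$, choosing $s'$ ever deeper inside an appropriate nonzero right ideal of $R$ --- just as $xy^{2}=y^{2}x+2y=y(yx+2)$ in the Weyl algebra $A_1(K)=\sigma(K[y])\langle x\rangle$. Combining this induction with the fact that finitely many elements of $D=Q_r(R)$ admit a common right denominator in $\Sigma$ yields both that $\Sigma$ is a right denominator set of $A$ and that $A\Sigma^{-1}=B$, which completes the argument.
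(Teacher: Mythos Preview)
The present paper does not prove Proposition~\ref{Orespbw}: it is stated with a citation to \cite{lezamaore}, Theorem~3.6, and no argument is supplied here. There is therefore no in-paper proof to compare your proposal against.

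That said, your plan is a correct route to the result and is in the spirit of the approach one finds in the literature on skew $PBW$ extensions. The domain part via the standard filtration and the identification of $\mathrm{Gr}(A)$ with an iterated skew polynomial ring of automorphism type is the usual argument. For the Ore part, the reduction you outline is exactly the right one: once $\Sigma=R\setminus\{0\}$ is known to be a right denominator set of $A$ with $A\Sigma^{-1}=B=\sigma(Q_r(R))\langle x_1,\dots,x_n\rangle$, the ring $B$ is right Noetherian (bijective skew $PBW$ extension of a division ring), hence a right Ore domain, and then $A$ is right Ore because a common right multiple of two nonzero elements of $A$ in $A\Sigma^{-1}$ can be cleared of denominators to give one in $A$. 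Your inductive sketch for the Ore property of $\Sigma$ in $A$---pushing a scalar past $x^{\alpha}$ and absorbing the $\delta$-contributions into $sR$ using the right Ore condition in $R$---is the correct mechanism; in a fully written proof it is usually tidier to run the induction on the number $n$ of variables (i.e., treat one Ore-type extension at a time) rather than directly on $|\alpha|$, which avoids juggling mixed $\sigma_i,\delta_i$ terms simultaneously.
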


\begin{proposition}[\cite{Reyes5}, Theorem 14]\label{lezamareyes5.1}
Let $R$ be a $K$-algebra with a finite dimensional generating
subspace $V$ and let $A=\sigma(R)\langle x_1,\dotsc,x_n\rangle$ be
a bijective skew $PBW$ extension of $R$. If $\sigma_i,\delta_i$
are $K$-linear and $\sigma_i(V)\subseteq V$, for $1\leq i\leq n$,
then
\[
{\rm GKdim}(A)={\rm GKdim}(R)+n.
\]
\end{proposition}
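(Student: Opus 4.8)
The plan is to compute ${\rm GKdim}(A)$ from the growth of the powers of a finite-dimensional generating subspace of $A$, obtaining the lower bound from the left $R$-freeness of $A$ on ${\rm Mon}(A)$ and the upper bound by degenerating $A$ to a quasi-commutative extension. Concretely, fix a finite-dimensional generating subspace $V$ of $R$ with $\sigma_i(V)\subseteq V$ and $1\in V$, and put $W:=V+Kx_1+\cdots+Kx_n$; since $V$ generates $R$ and every monomial $x^\alpha$ lies in $W^{|\alpha|}$, $W$ generates $A$ as a $K$-algebra, so ${\rm GKdim}(A)=\limsup_m\log_m\dim_K(W^m)$. For the inequality ${\rm GKdim}(A)\ge{\rm GKdim}(R)+n$ I would use $A=\bigoplus_{\alpha\in\NN^n}Rx^\alpha$: then $\bigoplus_{|\alpha|\le m}V^mx^\alpha\subseteq W^{2m}$ (because $V^m\subseteq W^m$ and $x^\alpha\in W^m$ whenever $|\alpha|\le m$) and its dimension is $\big(\#\{\alpha:|\alpha|\le m\}\big)\dim_K(V^m)\ge c\,m^n\dim_K(V^m)$ for some $c>0$; passing to $\limsup_m\log_{2m}$ and recalling that ${\rm GKdim}(R)$ is independent of the choice of $V$ gives the claim.

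For the reverse inequality the first step is to eliminate the derivations $\delta_i$ and the summand $R+Rx_1+\cdots+Rx_n$ of \eqref{sigmadefinicion2}. Filtering $A$ by total degree in the $x_i$, i.e. $A_p:=\bigoplus_{|\alpha|\le p}Rx^\alpha$, relations \eqref{sigmadefinicion1}--\eqref{sigmadefinicion2} make this a ring filtration with $A_0=R$, and its associated graded ring is the quasi-commutative bijective skew $PBW$ extension $A^\sigma=\sigma(R)\langle x_1,\dots,x_n\rangle$ with $x_ir=\sigma_i(r)x_i$ and $x_jx_i=c_{i,j}x_ix_j$. Since $A^\sigma$ is again a finitely generated $K$-algebra, a standard argument for filtered algebras yields ${\rm GKdim}(A)={\rm GKdim}(A^\sigma)$, so it is enough to compute ${\rm GKdim}(A^\sigma)$. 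Using bijectivity together with the compatibility relations among the $\sigma_i$ and the constants $c_{i,j}$, one identifies $A^\sigma$ with an iterated skew polynomial ring $R[x_1;\tau_1][x_2;\tau_2]\cdots[x_n;\tau_n]$, where each $\tau_k$ is a bijective $K$-linear ring endomorphism of $R[x_1;\tau_1]\cdots[x_{k-1};\tau_{k-1}]$ with $\tau_k|_R=\sigma_k$ and $\tau_k(x_l)=c_{l,k}x_l$ for $l<k$.

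It now suffices to prove the one-variable statement: if $B$ is a finitely generated $K$-algebra with a finite-dimensional generating subspace $V'\ni 1$ and $\tau$ is a bijective $K$-linear endomorphism of $B$ with $\tau(V')\subseteq V'$, then ${\rm GKdim}(B[x;\tau])={\rm GKdim}(B)+1$. Here bijectivity forces $\tau(V')=V'$, hence $\tau^j(V')=V'$ for every $j\in\Z$, so the rewriting is clean: from $xv=\tau(v)x$ with $\tau(v)\in V'$, moving every occurrence of $x$ in an $m$-fold product of elements of $V'\cup\{x\}$ to the right produces neither new factors nor any growth of the $V'$-degree, whence $(V'+Kx)^m\subseteq\sum_{p=0}^m V'^{\,m}x^p$ and $\dim_K((V'+Kx)^m)\le(m+1)\dim_K(V'^{\,m})$; the matching lower bound is once more the freeness argument. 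Applying this to $R[x_1;\tau_1]\cdots[x_n;\tau_n]$, one variable at a time, gives ${\rm GKdim}(A^\sigma)={\rm GKdim}(R)+n$, and therefore ${\rm GKdim}(A)={\rm GKdim}(R)+n$.

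I expect the hard part to be making the iteration of the previous paragraph legitimate: at the $k$-th stage one needs a finite-dimensional generating subspace of $R[x_1;\tau_1]\cdots[x_{k-1};\tau_{k-1}]$ that is stable under $\tau_k$, and since $\tau_k(x_l)=c_{l,k}x_l$ this amounts to keeping the $\sigma$-twisted powers $\tau_k^{j}(c_{l,k})\cdots\tau_k(c_{l,k})c_{l,k}$ of the structure constants inside one finite-dimensional subspace of $R$ --- this is precisely where the hypothesis $\sigma_i(V)\subseteq V$ is leveraged, and in all the standard examples, where the $c_{i,j}$ are scalars, it holds trivially. One must also check that the total-degree filtration does not alter the Gelfand-Kirillov dimension even though $A_0=R$ is infinite-dimensional. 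Throughout, the $K$-linearity of $\sigma_i,\delta_i$ is what keeps the relevant subspaces --- $\sigma_i(V)$, $\delta_i(V)$, and the span of the finitely many $c_{i,j}$, all contained in a common $V^d$ since $R=\bigcup_m V^m$ --- finite-dimensional; without such a tame $V$ the naive rewriting inside $A$ only yields the useless bound ${\rm GKdim}(A)\le n+2\,{\rm GKdim}(R)$, which is why exploiting $\sigma_i(V)\subseteq V$ is essential.
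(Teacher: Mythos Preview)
The paper does not supply its own proof of this proposition: it is simply quoted as Theorem~14 of \cite{Reyes5} and used as a black box in the examples of Section~3. There is therefore nothing in the present paper to compare your argument against.

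That said, your strategy is the standard one and matches the line of argument in the cited source. The lower bound via the $R$-freeness of $A$ on ${\rm Mon}(A)$ is clean and correct. For the upper bound, passing to the associated graded for the $x$-degree filtration and then viewing the quasi-commutative extension as an iterated Ore extension $R[x_1;\tau_1]\cdots[x_n;\tau_n]$ is exactly the right reduction; the one-variable fact ${\rm GKdim}(B[x;\tau])={\rm GKdim}(B)+1$ under $\tau(V')\subseteq V'$ is indeed the engine.

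You have correctly isolated the genuine technical obstruction: at the $k$-th stage of the iteration the natural frame $V+Kx_1+\cdots+Kx_{k-1}$ is \emph{not} $\tau_k$-stable unless the structure constants $c_{l,k}$ lie in $K$, since $\tau_k(x_l)=c_{l,k}x_l\in Rx_l$. The hypothesis $\sigma_i(V)\subseteq V$ by itself does not force $c_{l,k}\in K$, so as written the iteration does not close up in full generality; one needs either to assume the $c_{i,j}$ are central units (as in virtually all the examples of Section~3), or to work instead with a single combined frame of $A$ and bound $\dim_K W^m$ directly by a careful rewriting argument that tracks how the finitely many constants $c_{i,j}$ and the lower-order terms of \eqref{sigmadefinicion2}, all lying in some fixed $V^d$, propagate. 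Your remark that the filtered-versus-graded comparison is delicate because $A_0=R$ is infinite-dimensional is also well taken: the clean way around it is to bypass ${\rm gr}\,A$ altogether and compare $\dim_K W^m$ directly with $\dim_K V^m$ using the frame $W=V+\sum Kx_i$, which is how the argument in \cite{Reyes5} is organized.
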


\begin{proposition}\label{proposition1.2.8}
Let $R$ be a commutative domain, $\sigma$ an automorphism of $R$
and
\begin{center}
$R^\sigma:=\{r\in R|\sigma(r)=r\}$.
\end{center}
If $\sigma$ has infinite order,
then $Z(R[x;\sigma])=R^\sigma$. If $\sigma$ has finite order $v$,
then $Z(R[x;\sigma])=R^\sigma[x^v]$.
\end{proposition}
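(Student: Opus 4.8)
The plan is to describe explicitly when a polynomial $f=\sum_{i=0}^m r_i x^i \in R[x;\sigma]$ is central, using the skew commutation rule $xr = \sigma(r)x$. First I would record the two families of commutation constraints: centrality of $f$ is equivalent to (a) $fr=rf$ for all $r\in R$, and (b) $fx=xf$. Since $R$ is commutative, condition (b) applied degree-by-degree gives $r_i = \sigma(r_i)$, i.e. every coefficient lies in $R^\sigma$; this is the cheap half and uses only that the leading-type terms of $fx-xf$ must vanish. For condition (a), compute $f r - r f = \sum_i r_i(x^i r) - r\sum_i r_i x^i = \sum_i r_i(\sigma^i(r) - r)x^i$ (using commutativity of $R$ to move $r$ past $r_i$), so centrality forces $r_i(\sigma^i(r)-r)=0$ for every $r\in R$ and every $i$. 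Because $R$ is a domain, for each $i$ either $r_i=0$ or $\sigma^i = \mathrm{id}_R$.

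Now I would split on the order of $\sigma$. If $\sigma$ has infinite order, then $\sigma^i\neq\mathrm{id}$ for all $i\geq 1$, so $r_i=0$ for $i\geq 1$ and $f=r_0\in R^\sigma$; conversely every element of $R^\sigma$ is clearly central, giving $Z(R[x;\sigma])=R^\sigma$. If $\sigma$ has finite order $v$, then $\sigma^i=\mathrm{id}$ exactly when $v\mid i$, so the surviving coefficients sit in degrees that are multiples of $v$, and combined with the constraint $r_i\in R^\sigma$ from condition (b) we get $f\in R^\sigma[x^v]$. For the reverse inclusion I would check that $x^v$ is itself central: $x^v r = \sigma^v(r)x^v = r x^v$ for all $r\in R$, and $x^v$ commutes with $x$ trivially; hence $R^\sigma[x^v]\subseteq Z(R[x;\sigma])$, and equality follows.

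I do not anticipate a serious obstacle here — the argument is a direct coefficient comparison — but the one point that needs a little care is making sure both commutation conditions (a) with all $r\in R$ and (b) with $x$ are genuinely used: condition (a) alone pins down \emph{which degrees} can occur (the multiples of $v$), while condition (b) alone pins down \emph{which coefficients} can occur ($R^\sigma$), and only together do they yield exactly $R^\sigma[x^v]$ (respectively $R^\sigma$ in the infinite-order case). A secondary subtlety is the use of the domain hypothesis: it is what lets me pass from $r_i(\sigma^i(r)-r)=0$ for all $r$ to the dichotomy "$r_i=0$ or $\sigma^i=\mathrm{id}$", rather than merely to $r_i$ annihilating the (possibly nonzero) ideal generated by all $\sigma^i(r)-r$.
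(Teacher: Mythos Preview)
Your proposal is correct and follows essentially the same argument as the paper: compare coefficients in $fr=rf$ to force (via the domain hypothesis) $r_i=0$ unless $\sigma^i=\mathrm{id}$, and compare coefficients in $fx=xf$ to force each surviving $r_i\in R^\sigma$, then check the easy reverse inclusions. The only cosmetic difference is the order in which you apply the two commutation constraints.
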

\begin{proof}
The proof when $R$ is a field can be found in \cite{Rowen},
Proposition 1.6.25. For completeness we include the proof in the
general case. Firstly observe that $R^{\sigma}$ is a subring of
$R[x;\sigma]$, whence $R^{\sigma}[x^v]$ is the subring of
$R[x;\sigma]$ generated by $R^{\sigma}$ and $x^v$, this implies
that the elements of $R^{\sigma}[x^v]$ are polynomial in $x^v$
with coefficients in $R^{\sigma}$.

Let $p(x):=p_0+p_1x+\cdots+p_nx^n\in Z(R[x;\sigma])$, then for every $r\in R$, $rp(x)=p(x)r$, so
for every $0\leq i\leq n$, we get $rp_i=p_i\sigma^i(r)=\sigma^i(r)p_i$. If the order of $\sigma$ is
infinite, then $\sigma^i\neq i_R$ for every $i\geq 1$, hence $p_i=0$ for $i\geq 1$. Thus, in this
case $p(x)=p_0$; moreover, $p(x)$ commutes with $x$, so $p_0x=xp_0$, whence $p_0=\sigma(p_0)$,
i.e., $p_0\in R^{\sigma}$. Therefore, if $\sigma$ has infinite order, then $Z(R[x;\sigma])\subseteq
R^{\sigma}$, but clearly, $R^{\sigma}\subseteq Z(R[x;\sigma])$. Suppose that $\sigma$ has finite
order, say, $v$. If $\sigma^i\neq i_R$, i.e., if $v\nmid i$, then $p_i=0$, hence
$p(x)=p_0+p_1x^v+p_2(x^v)^2+\cdots+p_t(x^v)^t$. Since $p(x)$ commutes with $x$, then
$\sigma(p_i)=p_i$ for every $0\leq i\leq t$. This proves that $Z(R[x;\sigma])\subseteq
R^{\sigma}[x^v]$. But, $R^{\sigma}[x^v]\subseteq Z(R[x;\sigma])$ since every element of the form
$r(x^{v})^t$, with $r\in R^{\sigma}$ and $t\geq 0$, commutes with every element $s\in R$ and with
$x$.
\end{proof}


\section{Main theorem}

We start with the following easy proposition. We include the proof
for completeness.
\begin{proposition}\label{proposition3.5.1}
Let $A$ be a right Ore domain.
\begin{enumerate}
\item[\rm (i)]If $\frac{p}{q}\in Z(Q_r(A))$ then
\begin{enumerate}
\item[\rm (a)]$pq=qp$.
\item[\rm (b)]For every $s\in A-\{0\}$, $psq=qsp$.
\item[\rm (c)]$p\in Z(A)$ if and only if $q\in Z(A)$.
\end{enumerate}
\item[\rm (ii)]Let $p\in A$. Then, $\frac{p}{1}\in Z(Q_r(A))$
if and only if $p\in Z(A)$. Thus, $Z(A)\hookrightarrow Z(Q_r(A))$.
\item[\rm (iii)]If $K$ is a field and $A$ is a $K$-algebra such
that $Z(Q_r(A))=K$, then $Q(Z(A))=K=Z(Q_r(A))$.
\end{enumerate}

\end{proposition}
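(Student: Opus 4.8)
The plan is to carry out everything inside the division ring $Q_r(A)$ --- which is a division ring since $A$ is a domain with set of regular elements $A\setminus\{0\}$ --- where a fraction $\frac{p}{q}$ is literally $pq^{-1}$, and to use that the canonical map $A\hookrightarrow Q_r(A)$, $a\mapsto\frac{a}{1}$, is an injective ring homomorphism; hence any equality $\frac{a}{1}=\frac{b}{1}$ in $Q_r(A)$ already forces $a=b$ in $A$. The whole proposition is then a sequence of short manipulations, the only discipline needed being to reduce every computed expression to the form $\frac{a}{1}$ before comparing, so that this injectivity applies.

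For (i)(a), I would multiply the central element $\frac{p}{q}$ on both sides by $\frac{q}{1}$: on one side $\frac{p}{q}\cdot\frac{q}{1}=pq^{-1}q=\frac{p}{1}$, on the other $\frac{q}{1}\cdot\frac{p}{q}=qpq^{-1}=\frac{qp}{q}$, so $p=qpq^{-1}$, i.e. $pq=qp$. For (i)(b), since $\frac{p}{q}$ is central it commutes with the element $qsq\in A\subseteq Q_r(A)$ for every $s\in A$; expanding $pq^{-1}\cdot qsq=qsq\cdot pq^{-1}$ gives $psq=qsqpq^{-1}$, and substituting $qp=pq$ from (a) collapses the right-hand side to $qsp$. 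For (i)(c) I may assume $\frac{p}{q}\neq 0$, so $p\neq 0$ and $\frac{p}{q}$ is a central unit; from $\frac{p}{1}=\frac{p}{q}\cdot\frac{q}{1}$ and $\frac{q}{1}=\left(\frac{p}{q}\right)^{-1}\cdot\frac{p}{1}$, together with the fact that products and inverses of central elements of $Q_r(A)$ are central, one gets that $\frac{q}{1}$ is central when $\frac{p}{1}$ is, and conversely; part (ii) then translates centrality of $\frac{p}{1}$, $\frac{q}{1}$ into $p,q\in Z(A)$.

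For (ii), the ``if'' direction is the direct check that $\frac{p}{1}$ commutes with every $\frac{a}{b}$ once $p\in Z(A)$ (using $pb^{-1}=b^{-1}p$, which follows from $bp=pb$), and the ``only if'' direction is immediate: $\frac{p}{1}$ central forces $\frac{pa}{1}=\frac{p}{1}\cdot\frac{a}{1}=\frac{a}{1}\cdot\frac{p}{1}=\frac{ap}{1}$ for all $a\in A$, hence $pa=ap$ by injectivity of $A\hookrightarrow Q_r(A)$; the embedding $Z(A)\hookrightarrow Z(Q_r(A))$ is then just the restriction of $A\hookrightarrow Q_r(A)$. For (iii), since $A$ is a $K$-algebra we have $K\subseteq Z(A)$, and by (ii) $Z(A)\hookrightarrow Z(Q_r(A))=K$; the identification $Z(Q_r(A))=K$ is one of $K$-algebras, so the composite $K\to Z(A)\to Z(Q_r(A))$ is the structure map and hence surjective, while $Z(A)\to Z(Q_r(A))$ is injective, making it an isomorphism and forcing $Z(A)=K$. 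Being a field, $Z(A)$ is its own total ring of fractions, so $Q(Z(A))=K=Z(Q_r(A))$.

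I do not expect a genuine obstacle: this is a bookkeeping exercise in the Ore calculus. The only points meriting care are always normalizing computed fractions to denominator $1$ before invoking injectivity of $A\hookrightarrow Q_r(A)$, the harmless exclusion of the zero element in (i)(c), and, in (iii), being explicit that ``$Z(Q_r(A))=K$'' is an identification of $K$-algebras so that the composite $K\to Z(A)\to Z(Q_r(A))$ is surjective.
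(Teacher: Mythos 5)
Your proof is correct and follows essentially the same elementary route as the paper: direct manipulation of fractions in $Q_r(A)$ combined with the injectivity of $a\mapsto\frac{a}{1}$. The only structural difference is that you derive (i)(c) from (ii) (the paper instead proves (i)(c) by cancelling $p$ resp.\ $s$ in the domain using (i)(b), and then uses (i) for the ``only if'' half of (ii)); since your proof of (ii) nowhere invokes (i)(c) there is no circularity, and your use of explicit inverses $q^{-1}$ in the division ring $Q_r(A)$ in place of the paper's Ore-condition bookkeeping (e.g.\ $\frac{p}{q}=\frac{ps}{qs}$ for (i)(b)) is a harmless simplification.
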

\begin{proof}
(i) (a) We have $\frac{p}{q}\frac{q}{1}=\frac{q}{1}\frac{p}{q}$,
so $\frac{p}{1}=\frac{qp}{q}$, whence
$\frac{p}{1}\frac{q}{1}=\frac{qp}{q}\frac{q}{1}$, i.e.,
$\frac{pq}{1}=\frac{qp}{1}$, thus $pq=qp$.

(b) For $s=0$ is clear. Let $s\in A-\{0\}$, then
$\frac{p}{q}=\frac{ps}{qs}$, so by (a), $psqs=qsps$, whence
$psq=qsp$.

(c) If $\frac{p}{q}=0$, then $\frac{p}{q}=\frac{0}{1}$ and the
claimed trivially holds. We can assume that $p\neq 0$; by (b), for
every $s\in A-\{0\}$, $psq=qsp$, hence if $p\in Z(A)$, then
$sqp=qsp$, whence $sq=qs$, i.e., $q\in Z(A)$. On the other hand,
since $\frac{q}{p}\in Z(Q_r(A))$, then if $q\in Z(A)$ we get $p\in
Z(A)$.

(ii) If $\frac{p}{1}\in Z(Q_r(A))$, then by (i), $ps=sp$ for every
$s\neq 0$, whence $p\in Z(A)$. Conversely, let $p\in Z(A)-\{0\}$
(for $p=0$, $\frac{p}{1}\in Z(Q_r(A))$), then for every
$\frac{a}{s}\in Q_r(A)$ we have
$\frac{p}{1}\frac{a}{s}=\frac{pa}{s}$ and
$\frac{a}{s}\frac{p}{1}=\frac{ac}{r}=\frac{acp}{rp}$, where
$sc=pr=rp$, with $c,r\neq 0$. From this we get
$\frac{acp}{rp}=\frac{apc}{sc}=\frac{ap}{s}=\frac{pa}{s}$, i.e.,
$\frac{p}{1}\in Z(Q_r(A))$.

(iii) From (ii), $K\subseteq Z(A)\subseteq Z(Q_r(A))=K$, so
$Z(A)=K$, and hence $Q(Z(A))=K=Z(Q_r(A))$.
\end{proof}

The next example illustrates the part (iii) of Proposition
\ref{proposition3.5.1}.

\begin{example}
We consider the quantum plane $A:=K_q[x,y]$, where $q$ is not a
root of unity. We will show that $Z(Q_r(A))=K$. Let
$\frac{p}{s}\in Z(Q_r(A))-\{0\}$, where
$p:=\sum_{i=1}^tr_ix_1^{\alpha_i}x_2^{\beta_i}$ and
$s:=\sum_{j=1}^lu_jx_1^{\theta_j}x_2^{\gamma_j}$, with $r_i,u_j\in
K-\{0\}$. From $px_1s=sx_1p$ and since $q$ is not a root of unity,
we get $\beta_i+\beta_i\theta_j=\gamma_j+\gamma_j\alpha_i$ for
every $1\leq i\leq t$ and $1\leq j\leq l$. Similarly, from
$px_2s=sx_2p$ we obtain
$\theta_j\beta_i+\theta_j=\alpha_i\gamma_j+\alpha_i$ for all
$i,j$, whence $\beta_i+\alpha_i=\gamma_j+\theta_j$, so fixing $i$
and then fixing $j$ we conclude that $p$ and $s$ are homogeneous
of the same degree (this condition is not enough since
$\frac{x_1}{x_2}\notin Z(Q_r(A))$). Now,
\begin{center}
$\frac{p}{s}\frac{x_1}{1}=\frac{x_1}{1}\frac{p}{s}$, i.e.,
$\frac{\sum_{i=1}^tr_ix_1^{\alpha_i}x_2^{\beta_i}}{\sum_{j=1}^lu_jx_1^{\theta_j-1}x_2^{\gamma_j}}=
\frac{\sum_{i=1}^tr_ix_1^{\alpha_i+1}x_2^{\beta_i}}{\sum_{j=1}^lu_jx_1^{\theta_j}x_2^{\gamma_j}}$,
\end{center}
hence there exist $c:=x_1^mp_m(x_2)+\cdots +p_0(x_2),
d:=x_1^kq_k(x_2)+\cdots +q_0(x_2)\in A-\{0\}$ such that
\begin{center}
$(\sum_{i=1}^tr_ix_1^{\alpha_i}x_2^{\beta_i})c=(\sum_{i=1}^tr_ix_1^{\alpha_i+1}x_2^{\beta_i})d$,

$(\sum_{j=1}^lu_jx_1^{\theta_j-1}x_2^{\gamma_j})c=(\sum_{j=1}^lu_jx_1^{\theta_j}x_2^{\gamma_j})d$.
\end{center}
Since $p$ and $q$ are homogeneous, we can assume
$\alpha_1>\cdots>\alpha_t$ and $\theta_1>\cdots>\theta_l$, whence
$\beta_1<\cdots<\beta_t$ and $\gamma_1<\cdots<\gamma_l$. Then,
\begin{center}
$(r_1x_1^{\alpha_1}x_2^{\beta_1})(x_1^mp_m(x_2))=r_1q^{m\beta_1}x_1^{\alpha_1+m}x_2^{\beta_1}p_m(x_2)$,

$r_1x_1^{\alpha_1+1}x_2^{\beta_1}x_1^kq_k(x_2)=r_1q^{k\beta_1}x_1^{\alpha_1+1+k}x_2^{\beta_1}q_k(x_2)$,
\end{center}
whence $\alpha_1+m=\alpha_1+1+k$, i.e., $m=k+1$. Moreover, let
$p_m$ be the leader coefficient of $p_m(x_2)$ and $q_k$ be the
leader coefficient of $q_k(x_2)$, then $q^{\beta_1}p_m=q_k$.
Similarly, we can prove that $q^{\gamma_1}p_m=q_k$, but since $q$
is not a root of unity, $\beta_1=\gamma_1$. From
$\alpha_1+\beta_1=\theta_1+\gamma_1$ we get that
$\alpha_1=\theta_1$ (considering instead the identity
$\frac{p}{s}\frac{x_2}{1}=\frac{x_2}{1}\frac{p}{s}$ we obtain the
same result). Thus, we have
\begin{center}
$\alpha_1=\theta_1$ and $\beta_1=\gamma_1$.
\end{center}
Notice that
\begin{center}
$\frac{p}{s}=r_1u_1^{-1}+\frac{p-r_1u_1^{-1}s}{s}$, with
$r_1u_1^{-1}\in K\subseteq Z(Q_r(A))$,
\end{center}
hence $\frac{p-r_1u_1^{-1}s}{s}\in Z(Q_r(A))$. But observe that
$\frac{p-r_1u_1^{-1}s}{s}=0$, contrary, we could repeat the
previous procedure and find that there exists, either $i\geq 2$
such that $\alpha_i=\theta_1=\alpha_1,\beta_i=\gamma_1=\beta_1$,
or $j\geq 2$ such that $\theta_j=\theta_1,\gamma_j=\gamma_1$, a
contradiction. Thus, $\frac{p}{s}=r_1u_1^{-1}\in K$, and hence,
$Z(Q_r(A))=K$.
\end{example}

The previous example shows that the proof of the isomorphism
$Z(Q_r(A))\cong Q(Z(A))$ by direct computation of the center of
the total ring of fractions is tedious. An alternative more
practical method is given by the following theorem.

\begin{theorem}\label{theorem6.4.2}
Let $K$ be a field and $A$ be a right Ore domain. If $A$ is a
finitely generated $K$-algebra such that ${\rm GKdim}(A)<{\rm
GKdim}(Z(A))+1$, then
\begin{center}
$Z(Q_r(A))=\{\frac{p}{q}\mid p,q\in Z(A), q\neq 0\}\cong Q(Z(A))$.
\end{center}
\end{theorem}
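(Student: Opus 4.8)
The plan is to prove the two inclusions of the claimed description of $Z(Q_r(A))$ and then identify it with $Q(Z(A))$. By Proposition \ref{proposition3.5.1}(ii) we already have $Z(A)\hookrightarrow Z(Q_r(A))$, and since every nonzero element of $Z(A)$ is a unit in $Q_r(A)$ with inverse again central, the set $\{\frac{p}{q}\mid p,q\in Z(A),\,q\neq 0\}$ is a subfield of $Z(Q_r(A))$ isomorphic to $Q(Z(A))$; this gives the easy inclusion $\supseteq$ together with the final isomorphism. So the real content is the reverse inclusion: every central element of $Q_r(A)$ can be written with numerator and denominator in $Z(A)$.

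For the hard inclusion, take $\frac{p}{q}\in Z(Q_r(A))$, $p\neq 0$; by Proposition \ref{proposition3.5.1}(i)(c) it suffices to show $q\in Z(A)$ (equivalently $p\in Z(A)$). The idea is to pass to the $K$-subalgebra $B:=K\langle Z(A),p,q\rangle$ of $A$ generated by the center together with $p$ and $q$ — or, more efficiently, to argue directly inside $A$. Consider the centralizer $C_A(q)=\{a\in A\mid aq=qa\}$; this is a $K$-subalgebra of $A$ containing $Z(A)$ and containing $q$ itself, and by Proposition \ref{proposition3.5.1}(i)(a) it contains $p$ as well. If $q\notin Z(A)$ then $C_A(q)$ properly contains $Z(A)$, so $q$ is transcendental over $Z(Q_r(A))\cap A$ in a suitable sense; the aim is to derive from this that ${\rm GKdim}(A)$ is \emph{strictly} larger than ${\rm GKdim}(Z(A))+1$, contradicting the hypothesis. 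Concretely, if $q$ is not central then the subalgebra generated by $Z(A)$ and $q$ inside $C_A(q)$ is $Z(A)[q]$ (a commutative polynomial-type extension, since $q$ centralizes $Z(A)$), and because $q\notin Z(A)$ one shows $q$ is algebraically independent of $Z(A)$: any algebraic relation would, after clearing denominators over $Q(Z(A))$ and using that $\frac{p}{q}$ is central, force $q$ into $Z(A)$. Hence $Z(A)[q]$ has ${\rm GKdim} = {\rm GKdim}(Z(A))+1$. But $q$ together with the non-central element witnessing $q\notin Z(A)$ — or better, the pair $p,q$ whose ``ratio'' is a new central element not in $Q(Z(A))$ — generates inside $A$ an algebra of GK-dimension at least ${\rm GKdim}(Z(A))+1$, and the non-commutativity contributes strictly more, giving ${\rm GKdim}(A)\geq {\rm GKdim}(Z(A))+1$, with the inequality forced to be strict. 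This contradicts ${\rm GKdim}(A)<{\rm GKdim}(Z(A))+1$.

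The main obstacle is making precise the step ``non-commutativity contributes strictly more to GK-dimension.'' The clean way is the following: one knows (a standard fact about GK-dimension of Ore-type extensions, or of the subalgebra $Z(A)[q;\sigma]$-like structures) that a central subfield $Q(Z(A))$ of $Q_r(A)$ of transcendence degree $d={\rm GKdim}(Z(A))$ over $K$ forces ${\rm GKdim}(A)\geq d$; the point is to upgrade $\geq d$ to $\geq d+1$ using the single extra non-central direction, and then to $> d+1$ if there were a central fraction $\frac{p}{q}$ genuinely outside $Q(Z(A))$, because such a fraction supplies an element of $Z(Q_r(A))$ transcendental over $Q(Z(A))$, hence a purely transcendental extension $Q(Z(A))(\tfrac{p}{q})$ of transcendence degree $d+1$ sitting in the center of $Q_r(A)$, together with at least one further non-central generator — and by the Ore condition and the additivity/sub-additivity estimates for GK-dimension over such towers one gets ${\rm GKdim}(A)\geq d+2>d+1$. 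I would organize this as: (1) reduce to showing $q\in Z(A)$; (2) assume not, produce a central element of $Q_r(A)$ transcendental over $Q(Z(A))$; (3) lift a transcendence basis of $Z(A)$ together with this new element and a non-central generator to a finitely generated subalgebra of $A$ of GK-dimension $\geq d+2$; (4) invoke monotonicity of GK-dimension to contradict the hypothesis. The verification that GK-dimension behaves additively along the relevant tower — using that $A$ is a domain satisfying the Ore condition, so that localization does not decrease dimension and the relevant subalgebra looks like an iterated skew extension — is the computational heart and the place where the finite generation of $A$ and the strict inequality are both genuinely used.
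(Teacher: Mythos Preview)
Your reduction ``it suffices to show $q\in Z(A)$'' is already broken: a central fraction can be represented with a non-central denominator (e.g.\ $1=\frac{q}{q}$ for any $q\neq 0$), so you cannot hope to prove that \emph{every} representation of a central element has central denominator. What you actually need is that \emph{some} representation does, and nothing in your contradiction strategy addresses that. The subsequent claims are also unjustified: there is no reason a non-central $q$ must be algebraically independent of $Z(A)$ inside a domain, and a central element $\frac{p}{q}\notin Q(Z(A))$ need not be transcendental over $Q(Z(A))$ (it could be algebraic). Even granting transcendence, passing from ``$Z(Q_r(A))$ has transcendence degree $\geq d+1$'' to a lower bound on ${\rm GKdim}(A)$ is not automatic, since these central elements live in $Q_r(A)$, not in $A$, and GK-dimension does not behave well under localization in the direction you need. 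The ``non-commutativity contributes strictly more'' step you flag as the obstacle is in fact unnecessary (you only need $\geq d+1$, not $\geq d+2$, to contradict the hypothesis), but the steps before it already fail.

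The paper takes a completely different route that sidesteps all of this. Instead of analyzing a given fraction, it proves globally that the localization $A(Z(A)_0)^{-1}$ is a \emph{finite-dimensional} vector space over $Q(Z(A))$; this is where the GK hypothesis is used, via a direct growth estimate: if the $Q(Z(A))$-spaces $V^nQ(Z(A))$ were strictly increasing, then for any finite-dimensional $W\subseteq Z(A)$ one gets $(V+W)^{2n}\supseteq u_1W^n\oplus\cdots\oplus u_{d(n)}W^n$ with $d(n)\geq n+1$, forcing ${\rm GKdim}(A)\geq {\rm GKdim}(Z(A))+1$. Finite dimensionality then makes left multiplication by any nonzero $a\in A$ bijective on $A(Z(A)_0)^{-1}$, so this localization is already a division ring and hence equals $Q_r(A)$. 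In particular every element of $Q_r(A)$ automatically has a representation with denominator in $Z(A)_0$, and Proposition~\ref{proposition3.5.1}(i)(c) finishes. The key idea you are missing is this intermediate finiteness statement; it converts the GK-dimension inequality into an algebraic fact about $Q_r(A)$ without ever touching an individual central fraction.
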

\begin{proof}
We divide the proof in three steps.

\textit{Step 1.} As in the proof of Theorem 4.12 in \cite{Krause},
we will show that
\begin{center}
$Q_r(A)\cong A(Z(A)_0)^{-1}=\{\frac{p}{q}\mid p\in A, q\in
Z(A)_0\}$, with $Z(A)_0:=Z(A)-\{0\}$.
\end{center}
First observe that $Z(A)_0$ is a right Ore set of $A$, so
$A(Z(A)_0)^{-1}$ exists. From the canonical injection
$Q(Z(A))\hookrightarrow A(Z(A)_0)^{-1}$, $\frac{p}{q}\mapsto
\frac{p}{q}$, we get that $A(Z(A)_0)^{-1}$ is a vector space over
$Q(Z(A))$, moreover, $A(Z(A)_0)^{-1}=AQ(Z(A))$. We will show that
the dimension of this vector space is finite. Let $V$ be a frame
that generates $A$. Since $\{V^n\}_{n\geq 0}$ is a filtration of
$A$, then $A=\bigcup_{n\geq 0}V^n$ and $AQ(Z(A))=\bigcup_{n\geq
0}V^nQ(Z(A))$. Arise two possibilities: Either there exists $n\geq
0$ such that $V^nQ(Z(A))=V^{n+1}Q(Z(A))$, or else
\begin{center}
$Q(Z(A))\subsetneq VQ(Z(A))\subsetneq V^2Q(Z(A))\subsetneq \cdots$
\end{center}
In the first case $AQ(Z(A))=V^nQ(Z(A))$ and we get the claimed. In
the second case,
\begin{center}
$\dim_{Q(Z(A))}Q(Z(A))\lneq \dim_{Q(Z(A))}VQ(Z(A))\lneq
\dim_{Q(Z(A))}V^2Q(Z(A))\lneq\cdots $
\end{center}
and we will show that this produces a contradiction. In fact, for
every $n\geq 0$,
\begin{center}
$\dim_{Q(Z(A))}V^nQ(Z(A))\geq n+1$;
\end{center}
let $u_1,\dots,u_{d(n)}$ be a $Q(Z(A))$-basis of $V^nQ(Z(A))$,
thus, $d(n)\geq n+1$; we can assume that $u_i\in V^n$ for every
$1\leq i\leq d(n)$; let $W$ be an arbitrary $K$-subspace of $Z(A)$
of finite dimension, then
\begin{center}
$(V+W)^{2n}\supseteq V^nW^n\supseteq u_1W^n\oplus \cdots \oplus
u_{d(n)}W^n$
\end{center}
(the sum is direct since the elements $u_i$ are linearly
independent over $Z(A)$); from this we get
\begin{center}
$\dim_K(V+W)^{2n}\geq d(n)\dim_k(W^n)\geq (n+1)\dim_K(W^n)$,
\end{center}
but since $V+W$ is a frame of $A$, then ${\rm GKdim}(A)\geq 1+{\rm
GKdim}(Z(A))$, false.

Now we can prove the claimed isomorphism. For this consider the
canonical injective homomorphism $g:A\to A(Z(A)_0)^{-1}$,
$a\mapsto \frac{a}{1}$. If $a\in A-\{0\}$, then $\frac{a}{1}$ is
invertible in $A(Z(A)_0)^{-1}$. In fact, the map
$h:A(Z(A)_0)^{-1}\to A(Z(A)_0)^{-1}$, $\frac{p}{q}\mapsto
\frac{a}{1}\frac{p}{q}$, is an injective $Q(Z(A))$-homomorphism
since $A$ is a domain, but as was observed above, $A(Z(A)_0)^{-1}$
is finite-dimensional over $Q(Z(A))$, therefore $h$ is surjective,
whence, there exists $\frac{p}{q}\in A(Z(A)_0)^{-1}$ such that
$\frac{a}{1}\frac{p}{q}=\frac{1}{1}$. Observe that
$\frac{p}{q}\frac{a}{1}=\frac{1}{1}$: In fact, since $p\neq 0$,
there exists $\frac{p'}{q'}$ in $A(Z(A)_0)^{-1}$ such that
$\frac{p}{1}\frac{p'}{q'}=\frac{1}{1}$, and since
$\frac{p}{q}=\frac{1}{q}\frac{p}{1}$, then
\begin{center}
$\frac{a}{1}\frac{1}{q}\frac{p}{1}\frac{p'}{q'}=\frac{1}{1}\frac{p'}{q'}$,
i.e., $\frac{a}{1}\frac{1}{q}=\frac{p'}{q'}$, so
$\frac{a}{1}\frac{1}{q}\frac{q}{1}=\frac{p'}{q'}\frac{q}{1}$,
whence $\frac{a}{1}=\frac{p'}{q'}\frac{q}{1}$, so
$\frac{p}{q}\frac{a}{1}=\frac{p}{q}\frac{p'}{q'}\frac{q}{1}=\frac{1}{q}\frac{p}{1}\frac{p'}{q'}\frac{q}{1}=\frac{1}{q}\frac{q}{1}=\frac{1}{1}$.
\end{center}
In order to conclude the proof of the isomorphism
$A(Z(A)_0)^{-1}\cong Q_r(A)$, observe that any element
$\frac{p}{q}\in A(Z(A)_0)^{-1}$ can be written as
$\frac{p}{q}=g(p)g(q)^{-1}$.

\textit{Step 2.} Let $C:=\{\frac{p}{q}\mid p,q\in Z(A), q\neq
0\}$. If $\frac{p}{q}\in Z(Q_r(A))$, then by the first step we can
assume that $q\in Z(A)_0$, and from the part (i)-(c) of
Proposition \ref{proposition3.5.1}, we get $p\in Z(A)$. Therefore,
$Z(Q_r(A))\subseteq C$. Conversely, let $\frac{p}{q}\in C$, then
$p,q\in Z(A)$, with $q\neq 0$, whence, by the part (ii) of
Proposition \ref{proposition3.5.1}, $\frac{p}{1},\frac{q}{1}\in
Z(Q_r(A))$, so $\frac{1}{q}\in Z(Q_r(A))$, and hence,
$\frac{p}{q}=\frac{p}{1}\frac{1}{q}\in Z(Q_r(A))$. Thus,
$C\subseteq Z(Q_r(A))$.

\textit{Step 3}. According to the part (ii) of Proposition
\ref{proposition3.5.1}, we have the canonical injective
homomorphism $Z(A)\xrightarrow{\iota} Z(Q_r(A))$, $p\mapsto
\frac{p}{1}$, that sends invertible elements of $Z(A)$ in
invertible elements of $Z(Q_r(A))$, moreover, by the step 2, every
element $\frac{p}{q}\in Z(Q_r(A))$ can be written
$\frac{p}{q}=\iota(p)\iota(q)^{-1}$. This proves the isomorphism
$Z(Q_r(A))\cong Q(Z(A))$.
\end{proof}

\section{Examples}

Next we present many examples of $K$-algebras that satisfy the
hypotheses of Theorem \ref{theorem6.4.2}. Most of them within the
skew $PBW$ extensions.

\begin{example}\label{example6.4.3}
(i) Any domain $A$ such that $\dim_K A<\infty$. For example, the
real algebra $\mathbb{H}$ of quaternions since
$\dim_{\mathbb{R}}(\mathbb{H})=4$.

(ii) Any right Ore domain $A$ finitely generated as $Z(A)$-module.
\end{example}

\begin{example}
Applying Propositions \ref{Orespbw} and \ref{lezamareyes5.1}, we
will check next that the following skew $PBW$ extensions are
$K$-algebras that satisfy the hypotheses of Theorem
\ref{theorem6.4.2}. The precise definition of any of these
algebras can be found in \cite{Lezama-reyes}.

(i) Consider a skew polynomial ring $A:=R[x;\sigma]$, with $R$ a
commutative domain $R$ that is a $K$-algebra generated by a
subspace $V$ of finite dimension such that $\sigma(V)\subseteq V$,
$\sigma$ is $K$-linear of finite order $m$, $R^\sigma=K$ and ${\rm
GKdim}(R)=0$. Then, ${\rm GKdim}(A)=1$, and from Proposition
\ref{proposition1.2.8}, $Z(A)=K[x^m]$, and hence, ${\rm
GKdim}(Z(A))=1$. Thus, $Z(Q_r(A))\cong Q(Z(A))=K(x^m)$. A
particular case of this general example is
$A:=\mathbb{C}[x;\sigma]$ as $\mathbb{R}$-algebra, with
$\sigma(z):=\overline{z}$, $z\in \mathbb{C}$ (here $\mathbb{C}$
and $\mathbb{R}$ are the fields of complex and real numbers,
respectively). In this case the order of $\sigma$ is two and ${\rm
GKdim}(\mathbb{C})=0$ since $\dim_\mathbb{R}(\mathbb{C})=2$.

(ii) Let $char(K)=p>0$ and $A:=S_h:=K[t][x_h;\sigma_h]$ be the
algebra of shift operators. Then, ${\rm GKdim}(A)=2$. Moreover,
for every $k\geq 0$, $\sigma_h^k(t)=t-kh$, then $\sigma_h^p(t)=t$,
i.e., the order of $\sigma_h$ is $p$, therefore,
$Z(A)=K[t]^{\sigma_h}[x_h^p]$. Since $K[t^p]\subseteq
K[t]^{\sigma_h}\subseteq K[t]$ and $K[t]$ is finitely generated
over $K[t^p]$, then ${\rm GKdim}(K[t^p])={\rm GKdim}(K[t])=1$,
whence ${\rm GKdim}(K[t]^{\sigma_h})=1$. Therefore, ${\rm
GKdim}(Z(A))=2$. Thus, $Z(Q_r(A))\cong
Q(Z(A))=Q(K[t]^{\sigma_h})(x_h^p)$.

(iii) Let $char(K)=p>0$ and
$A:=K[t][x;\frac{d}{dt}][x_h;\sigma_h]$ be the algebra of shift
differential operators. Then, ${\rm GKdim}(A)=3$ and can be proved
that $Z(A)=K[x^p,x_h^p, t^{p^2}-t^p]$. Since ${\rm
GKdim}(Z(A))=3$, then $Z(Q_r(A))\cong Q(Z(A))=K(x^p,x_h^p,
t^{p^2}-t^p)$.

(iv) Let $char(K)=p>0$ and $A:=A_n(K)$ be the Weyl algebra. Since
${\rm GKdim}(A)=2n$ and
$Z(A)=K[t_1^p,\dots,t_n^p,x_1^p,\dots,x_n^p]$ (see
\cite{Levandovskyy}, ejemplo 1.3.), then ${\rm GKdim}(Z(A))=2n$.
Therefore, $Z(Q_r(A))\cong Q(Z(A))=
K(t_1^p,\dots,t_n^p,x_1^p,\dots,x_n^p)$.

(v) Let $char(K)=p>0$ and $A:=\mathcal{J}:=K\{x,y\}/\langle
yx-xy-x^2\rangle$ be the Jordan algebra. Since ${\rm
GKdim}(\mathcal{J})=2$ and $Z(A)=K[x^p,y^p]$ (see Theorem 2.2 in
\cite{Shirikov}), then ${\rm GKdim}(Z(A))=2$, whence
$Z(Q_r(A))\cong Q(Z(A))=K(x^p,y^p)$.

(vi) Consider the quantum plane $A:=K_q[x,y]$, with $q\neq 1$ a
root of unity of degree $m\geq 2$. Then ${\rm GKdim}(A)=2$ and
$Z(A)=K[x^m,y^m]$ (see \cite{Shirikov}). Therefore,
$Z(Q_r(A))\cong Q(Z(A))=K(x^m,y^m)$.

(vii) The previous example can be extended to the quantum
polynomials $A:=K_q[x_1,\dots,x_n]$, where $n\geq 2$ and $q\in
K-\{0,1\}$, defined by
\begin{center}
$x_jx_i=qx_ix_j$, with $1\leq i<j\leq n$.
\end{center}
If $q$ is a root of unity of degree $m\geq 2$, then can be proved
that if $n$ even, then $Z(A)=K[x_1^m,\dots,x_n^m]$. Therefore,
${\rm GKdim}(Z(A))=n={\rm GKdim}(A)$ and hence
\begin{center}
$Z(Q_r(A))\cong Q(Z(A))=K(x_1^m,\dots,x_n^m)$.
\end{center}

(viii) Let $A_q$ be the quantum Weyl algebra generated by $x,y$
with rule of multiplication $yx=qxy+a$, where $q,a\in K-\{0\}$. If
$q$ is a primitive root of unity of degree $m\geq 2$, then
$Z(A_q)=K[x^m,y^m]$ (see \cite{Zhangetal3}). Since ${\rm
GKdim}(A_q)=2$, then $Z(Q_r(A_q))\cong Q(Z(A))=K(x^m,y^m)$.

(ix) In \cite{LezamaHelbert2} has been computed the center of the
following algebras. In every example we assume that the parameters
$q$'s are root of unity of degree $l\geq 2$, or $l_i\geq 2$,
appropriately:
\begin{enumerate}
\item[\rm (a)]Algebra of $q$-differential operators, then $Z(A)=K[x^{l},y^{l}]
$ and ${\rm GKdim}(A)=2$, so $Z(Q_r(A))\cong
Q(Z(A))=K(x^{l},y^{l})$.
\item[\rm (b)]Additive analogue of the Weyl algebra, $ Z(A)=K[x_{1}^{l_{1}},\dots,x_{n}^{l_{n}},y_{1}^{l_{1}},\dots,y_{n}^{l_{n}}]
$ and ${\rm GKdim}(A)=2n$, so $Z(Q_r(A))\cong
Q(Z(A))=K(x_{1}^{l_{1}},\dots,x_{n}^{l_{n}},y_{1}^{l_{1}},\dots,y_{n}^{l_{n}})$.
\item[\rm (c)]Algebra of linear partial $q$-dilation operators, in this case we have
${\rm GKdim}(A)=2n$ and
$Z(A)=K[t_{1}^{l},\dots,t_{n}^{l},H_{1}^{l},\dots,H_{n}^{l}] $.
Therefore, $Z(Q_r(A))\cong
Q(Z(A))=K(t_{1}^{l},\dots,t_{n}^{l},H_{1}^{l},\dots,H_{n}^{l})$.

\item[\rm (d)]Algebra of linear partial $q$-differential operators,
in this case we have ${\rm GKdim}(A)=2n$ and
$Z(A)=K[t_{1}^{l},\dots,t_{n}^{l},D_{1}^{l},\dots,D_{n}^{l}] $.
Hence, $Z(Q_r(A))\cong
Q(Z(A))=K(t_{1}^{l},\dots,t_{n}^{l},D_{1}^{l},\dots,D_{n}^{l})$.
\end{enumerate}

(x) Let $\mathfrak{sl}(n,K)$ be the Lie algebra of $2\times 2$
matrices with null trace with $K$-basis $e,f,h$. If $char(K)=2$,
then $Z(\mathcal{U}(\mathfrak{sl}(2,K)))=K[e^2,f^2,h]$ (see
\cite{Levandovskyy}, p. 147). Moreover, ${\rm
GKdim}(\mathcal{U}(\mathfrak{sl}(2,K)))=3$. Thus, $Z(Q_r(A))\cong
Q(Z(A))=K(e^2,f^2,h)$.
\end{example}

\begin{remark}
As occurs for the Gelfand-Kirillov conjecture (see \cite{GK}), if
the hypotheses of Theorem \ref{theorem6.4.2} fail, then the
isomorphism $Z(Q_r(A))\cong Q(Z(A))$ could hold or fail. Thus, the
hypotheses are not necessary conditions. For example,

(a) $\mathbb{H}$ is not finitely generated as
$\mathbb{Q}$-algebra, however $Q_r(\mathbb{H})=\mathbb{H}$ and
$Z(Q_r(\mathbb{H}))\cong \mathbb{R}\cong Q(Z(\mathbb{H}))$.

(b) Let $K$ be a field with $char(K)=0$, and let $\mathcal{G}$ be
a three-dimensional completely solvable Lie algebra with basis
$x,y,z$ such that $[y,x]=y$, $[z,x]=\lambda z$ and $[y,z]=0$,
$\lambda\in K-\{0\}$ (see Example 14.4.2 in \cite{McConnell}). If
$\lambda\in K-\mathbb{Q}$, then $Z(U(\mathcal{G}))=K$ and ${\rm
GKdim}(\mathcal{U}(\mathcal{G}))=3$, thus, in this case ${\rm
GKdim}(\mathcal{U}(\mathcal{G}))\nless{\rm
GKdim}(Z(\mathcal{U}(\mathcal{G})))+1$, and 14.4.7 in
\cite{McConnell} shows that
$Z(Q_r(\mathcal{U}(\mathcal{G})))\ncong
Q(Z(\mathcal{U}(\mathcal{G})))$.

(c) Let $A:=U_q^{+}(sl_m)$ be the quantum enveloping algebra of
the Lie algebra of strictly superior triangular matrices of size
$m\times m$ over a field $K$, where $q\in K-\{0\}$ is not a root
of unity. In \cite{Alev2} was proved that $Z(A)$ is the classical
commutative polynomial algebra over $K$ in $n$ variables, with
$m=2n$ or $m=2n+1$, whence, ${\rm GKdim}(Z(A))=n$. On the other
hand, according to \cite{Alev2}, p.\,236, $A$ is an iterated skew
polynomial ring of $K$ of $\frac{m(m-1)}{2}$ variables, hence
${\rm GKdim}(A)=\frac{m(m-1)}{2}$. Thus, ${\rm GKdim}(A)\nless
{\rm GKdim}(Z(A))+1$, however, $Z(Q_r(A))\cong Q(Z(A))$.
\end{remark}



\begin{thebibliography}{200}



\bibitem{lezamaore}\textbf{Acosta, J.P., Chaparro, C., Lezama, O., Ojeda, I., and Venegas, C.},
\textit{Ore and Goldie theorems for skew $PBW$ extensions},
Asian-European J. Math., 06, 2013, 1350061 [20 pages].

\bibitem{Alev2}\textbf{Alev, J. and Dumas, F.}, \textit{Sur le corps des fractions de certaines algèbres quantiques}, J. of Algebra, 170, 1994, 229-265.













\bibitem{Zhangetal3}\textbf{Chan, K., Young, A., and Zhang, J.J.}, \textit{The discriminant formulas and applications}, Algebra Number
Theory, 10, 2016, 557-596.




\bibitem{LezamaGallego}\textbf{Lezama, O. and Gallego, C.}, {\em Gr\"obner bases for ideals of sigma-PBW extensions}, Communications in Algebra,
39 (1), 2011, 50-75.

\bibitem{GK}\textbf{Gelfand, I. and Kirillov, A.}, \textit{Sur le corps liés aux alg\`{e}bres enveloppantes des
alg\`{e}bres de Lie}, Math. IHES, 31, 1966, 509-523.






\bibitem{Krause}\textbf{Krause, G.R., and Lenagan, T.H.}, {\em Growth of algebras and Gelfand-Kirillov dimension},
(Revised edition), Graduate Studies in Mathematics, 22, AMS,
Providence, RI (2000).

\bibitem{Levandovskyy} \textbf{Levandovskyy, V.}, \textit{Non-commutatve Computer Algebra for Polynomial Algebras: Gröbner Bases,
Applications and Implementation}, Doctoral Thesis, Universität Kaiserslautern, Germany, 2005.



\bibitem{Lezama-reyes} \textbf{Lezama, O. and Reyes, A.}, \textit{Some homological properties of skew $PBW$ extensions},
Comm. Algebra 42, 2014, 1200-1230.

\bibitem{LezamaHelbert2}\textbf{Lezama, O. and Venegas, H.}, \textit{Center of skew PBW extensions},
arXiv: 1804.05425 [math.RA], 2018.




\bibitem{McConnell}\textbf{McConnell, J. and Robson, J.}, \textit{Noncommutative Noetherian Rings}, Graduate Studies in Mathematics, AMS, 2001.

\bibitem{Reyes5}\textbf{Reyes, A.}, \textit{Gelfand-Kirillov dimension of skew PBW extensions}, Rev. Col. Mat., 47 (1), 2013, 95-111.

\bibitem{Rowen}\textbf{Rowen, L.H.}, Ring Theory, Vol.1, Academic
Press, 1988.




\bibitem{Shirikov}\textbf{Shirikov, E. N.}, \textit{ Two-generated graded algebras}, Algebra Discrete Math., 3, 2005, 64-80.




\end{thebibliography}
\end{document}